\newcommand{\new}{\newcommand*}
\new{\rnew}{\renewcommand*}
\new{\newe}{\newenvironment*}
\new{\newt}{\newtheorem}
\new{\stl}{\setlength}
\new{\bea}{\begin{eqnarray}}
\new{\eea}{\end{eqnarray}}
\new{\be}{\begin{equation}}
\new{\ee}{\end{equation}}
\new{\bean}{\begin{eqnarray*}}
\new{\eean}{\end{eqnarray*}}
\new{\no}{\nonumber}
\new{\bt}{\begin{theorem}}
\new{\et}{\end{theorem}}
\new{\bl}{\begin{lemma}}
\new{\el}{\end{lemma}}
\new{\bc}{\begin{corollary}}
\new{\ec}{\end{corollary}}
\new{\bp}{\begin{proof}\quad}
\new{\ep}{\end{proof}}
\new{\ba}{\begin{array}}
\new{\ea}{\end{array}}
\rnew{\theequation}{\thesection.\arabic{equation}}
\new{\sect}[1]{ \section{#1}
\setcounter{equation}{0} \setcounter{figure}{0} }
\def \endproof {\qquad \vrule height 5pt width 5pt depth 2pt}
\title{The Riemann Mapping Problem\date{}}
\author{{  Zhijian Qiu }%   \thanks{ }}
\\{\small   School  of  mathematics}
\\{\small  Southwestern University of Finance and Economics}
\\{\small   Chengdu,  China, E-mail:\ qiu@swufe.edu.cn }
}
\begin{document}
%\begin{titlepage}
\maketitle
\begin{abstract}
In this article we investigate the century-old continuous extension
problem of the Riemann map.  Let $G$ be a simply connected domain.
We call $\lambda$ in $\partial G$
  a multiple point   if   there are simply
connected subdomains $ U$ and $V$  such that
 $\lambda \in\partial U \cap\partial V$ and $ dist (\partial U\cap G ,
 \partial V\cap G )>0$.  We show that the Riemann map of $G$ has a
continuous extension to $\overline G$ if and only if $\partial G$
has no multiple points.

All of the results in this paper, together with the Riemann mapping
theorem, give a complete and desirable solution to the mapping
problem which was originally raised by Riemann in 1851 and
intensively investigated by many famous mathematicians  throughout
history.
\end{abstract}

\begin{keywords} Riemann map,  simply connected
domain.
%, Carathe\"{o}dory domain
\end{keywords}
\begin{AMS} 30H05, 30E10.%, 46E15.
\end{AMS}

 \section*{Introduction}
 Complex analysis is not only one of the most outstanding
 accomplishments of classical mathematics, but   a very
 important component in modern analysis.
A connected open subset $G$ in the complex plane {\bf C} is called a
simply connected domain if  $({\bf C}\cup \{\infty\})-G$ is
connected. The most important result in the core part of the theory
is the Riemann mapping theorem, which has been said by some to be
the greatest theorem of the nineteenth century \cite{krantz}[p.83].
The Riemann mapping theorem says that every simply connected domain
having at least two boundary points can be mapped onto the unit disk
$D$ by an injective analytic function $\varphi$. Poincar\'{e} showed
that $\varphi$ is essentially unique. This map is known as the
Riemann map.

The Riemann mapping theorem solves only half of the   problem that
was initially investigated  by Riemann in 1851  \cite{rman}. The
problem of how the map behaves on the boundary  became the focus
afterwards. When $\varphi$ extends continuously to   $\overline G$?
 Schwarz was the first one who
separated out the interior part and the   boundary part of the
Riemann mapping theorem,
 and Poincar\'{e}  meanwhile  did some thing similar.
 Osgood and Taylor   separated the problem  as they
 wrote  in \cite{ot}: "Riemann's problem of mapping
a simply connected plane region whose boundary consists more than a
single point conformally on a circle as normal region may be divided
into two parts:  a) the internal problem; namely, the map of the
interior points, and  b) the boundary problem; namely, the behavior
of the map on the boundary".  In his paper  \cite{cara1912},
Carathe\"{o}dory divided the Riemann mapping problem into two parts:
the interior problem, and the existence of a continuous extension of
that map to the boundaries. The purpose of this paper is to solve
this continuous extension problem, which has  been intensively
investigated by Osgood, Carathe\"{o}dory and others  throughout the
years.

 The  Problem  a) was   eventually  solved
through the work of generations of mathematicians including Riemann,
Schwarz, Poincar\'{e}, Klein, Neumann, Harnack, Osgood,
Carathe\"{o}dory and many others. Koebe,   Rieze and Jej\'{e}r
contributed in the process of refinements of the proofs.

For Problem b), Schwarz and Painlev\'{e} and others  proved that
$\varphi$ is a homeomorphism from $\overline G$ onto $\overline D$
for domains with boundaries made by segments or  piecewisely
analytic curves. Osgood was the first one who gave a correct proof
of the Riemann mapping theorem. That success came after he became
fully aware that the boundary behavior was more complicated than had
been suspected previously.  To tackle the boundary behavior problem,
he divided all simply connected domains into two types: Jordan
domains and non-Jordan domains. He observed that
  the Riemann mapping  would   extend to be a
homeomorphism on the closure if the domain has a piecewisely smooth
boundary and he further conjectured the same result would be true
for all Jordan domains. Meanwhile,   Osgood remarked: it didn't make
sense to ask about   behavior on the boundary for non-Jordan
domains.

A decade after Osgood's conjecture, Carathe\"{o}dory took up the
Riemann mapping problem and published three papers in 1912-1913
\cite{ cara1912,cara1913,cara1913b}.  He solved  Problem a) by
giving the first truly function-theoretic proof for the Riemann
mapping theorem. For Problem b), he focused on the problem of
whether the map could have a continuous extension to the boundary.
The most famous result that came out after that was his proof
 of  the  Osgood conjecture    \cite{cara1913}, and
  it is now   known as the Carathe\"{o}dory theorem: the Riemann map
   $\varphi$ extends to be a homeomorphism from
$\overline G$ onto $\overline D$  if $G$ is a Jordan domain.

 For
non-Jordan domains,   Carathe\"{o}dory might share Osgood's view
somehow and did not overcome the difficulties to offer results of
continuous extension on the boundary. Instead, he inaugurated the
theory  of prime ends in \cite{cara1913b}.  The origins of this
theory are actually to be found in the work of Osgood. Using the
theory of prime ends, Carathe\"{o}dory approached the problem in
such a way that he was able to offer an abstract type of resolution.
As Gray wrote in his article  \cite{gray}[p.84] : \ "This was the
final resolution of a problem originally raised by Riemann".
Before this work, %the  Carathe\"{o}dory extension problem remains open and
the most important result related to the extension  problem was
still the Carathe\"{o}dory theorem.

Carathe\"{o}dory's work was regarded  highly. Even
  75 years after,   A. Shields
wrote in the beginning of \cite{sh}: "the results in these papers
are still of great importance"; and then in page 20: "If there had
been Fields Medals at that time, Carathe\"{o}dory  might have been a
candidate on the basis of this work".

The continuous extension problem remains open before this work, and
so does Problem b). In this article, we tackle the problem with a
conceptual new approach. We did not follow the previous routes,
instead, we introduce a key concept: multiple points. We call
$\lambda\in\partial G$ a multiple point  if  there are simply
connected subdomains $ U$ and $V$ such that
 $\lambda \in\partial U \cap\partial V$ and $ dist (\partial U\cap G,
 \partial V\cap G )>0$.
With multiple points, we now can state  our  extension theorem of
Riemann map: $\varphi$ extends continuously to $\overline G$ if and
only if $\partial G $ has no multiple points. This result reveals
the important, essential and amazing fact: the continuity of
$\varphi$ on $\partial G$ has nothing to do with the smoothness or
roughness of $\partial G$, it depends only on the multiplicity of
points in $\partial G$.

In addition to our extension theorem, Theorem~\ref{t:1}
  also characterizes    continuous Riemann maps.
Theorem~\ref{t:1} tells how the extension of $\varphi$ is done at
each point in $\partial G$.  So our method is localizable and  it
actually tells on which part of $\partial G$ the Riemann map is
continuously extendable.  Modifying slightly, it can be used to
treat conformal maps between non-simply connected domains.
%Besides Theorem~\ref{t:main} andTheorem~\ref{t:1},
Theorem~\ref{t:3} illustrates what a continuous
 Riemann map should be from a topological point of view. Overall,
  all these results, together with the Riemann mapping theorem,
  % fully reveal the mystery of
  completely solve the Riemann mapping problem.

%Overall, this result, together with those in \cite{qiu-c}  as well
%as the Riemann mapping theorem,  completely solve the Riemann mapping problem.
\section{The   Results}
A Jordan curve $\gamma$    is   the image of the interval $[a,b]$
under a continuous function $f$ such that $f$ is injective on
$(a,b)$ and $f(a)=f(b)$. $\gamma $ is called a Jordan arc if it the
image of  an injective continuous function on $[a,b]$.
 A crosscut  of $G$ is a  Jordan arc  whose interior is contained in $G$ but its endpoints are  in $\partial G$.
 A crosscut separates $G$ into two disjoint simply connected domains (one may consult \cite{carl-c} for a proof).

A point  $b$ in $\partial G$  is said to be   accessible   if there
is a Jordan arc $J$ that is contained in $\overline G$ and $J\cap
\partial G =\{b\}$.  Let  $\partial_{a}G$  denote the set of accessible points of $G$
and let $\partial_{n}G=\partial G-\partial_{a}G$.

 The  following  few  lemmas  have been known for more than a
 century, however,  the proofs that  the author was able to find are not easy and are based on other results.
 The  the short and direct proofs below are duo to the author's work of constant
 efforts in years. We include them here  for the seek of self-contained,   the reader's
 convenience  and the  ultimate benefit   of the math society.
   Throughout this article,    $G$ denotes a
bounded simply connected domain,   $\varphi$  and  $\psi$   denote
  the Reimann map of $G$  and the  inverse  of $\varphi$, respectively.
\begin{lem} {\label{l:a} }
Let $f$ be an analytic function on $G$ and let $E\subset\partial G$
 such that $\partial G-E$ is a Jordan arc. If   there is a constant
$c$ such that for each $\lambda\in E$, $f(z)\rightarrow c$ as $z$
approaches to $\lambda$ from the inside of $G$, then $f$ is a
constant function on $G$.
\end{lem}
\begin{proof}
Firstly, if $G$ is a disk, then the conclusion follows easily from
the reflection principle.  For a general $G$, observe that we can
find an arc $J$  which is  a portion of a circle
   such that $J$ is a crosscut  of $G$ and its endpoints are in $E$.  $G-J$ has a component $U$ so that $\partial
U\cap E$   connected. Let $g$ be a conformal map from $D$ onto $U$.
Then the inverse $g^{-1}$ can extend continuously to $J^{\circ} $.\
Let
$L=g^{-1}(J^{\circ})$, then %it is a subarc of $\partial D$. Now
the hypothesis implies that  for each $b\in
\partial D-\overline L$, $f\circ g(z)\rightarrow c$ as
$z$ approaches to $b$ from the inside of $\varphi(U)$, and this
infers that $f\circ  g=c$. Therefore $f=c$ on $G$.
\end{proof}

\begin{lem}  \label{l:b}
Let $I$ be an arc on $\partial D$ and let $f$ be a bounded analytic
function on $D $, then there is $\lambda\in I^{\circ}$ such that $f$
has a radical limit at $\lambda$.
\end{lem}
\begin{proof}  Suppose the contrary. For $b\in I$ and let
 $J_{b}$ be the radius ending at $b$, then
  $ \overline {\psi(J_{b})}\cap \partial G $ is connected.
  % $\overline {\varphi^{-1}(J_{b})}\cap E $ is connected and has interior.
  Let $\Gamma_{b}=  \overline {\psi(J_{b})}\cap \partial G$.
 % then $\Gamma_{b}\neq\emptyset$.
  Note,     for  $n\geq 1$, the number of balls of radius great than $\frac{1}{n}$ and mutually disjoint  in $
  \overline G$ is finite.
  %Each $\Gamma_{b}$ contains a  ball, say $B_{b}$, and  so there at most countable many disjoint $B_{b}$s.
 So,  there are   $b_{1}$ and $b_{2}$ in $I$, such that $\Gamma_{b_{1}}\cap \Gamma_{b_{2}}\neq
 \emptyset$. Notice that $J_{b_{1}}\cup J_{b_{2}}$ is a crosscut
 that separates $D$ into two parts, and let $W$ be the part whose
 boundary contains an entire arc which joins $b_{1} $ and $b_{2}$.
 Let $a\in \Gamma_{b_{1}}\cap\Gamma_{b_{2}}$ and set
 $g(z)=\frac{1}{\psi(z)-a}$. $g$ maps $W$ onto a simply connected
 domain $\Omega$, and $\partial \Omega= g(J_{b_{1}})\cap
 g(J_{b_{2}})\cup \{\infty\}$. Let $h$ be the Riemann map that
 maps $\Omega$ onto $D$, and set  $\alpha=h\circ g$,  then   $\partial D= h(J_{b_{1}})\cap
 h(J_{b_{2}})\cup \{h(\infty)\}$.
 So we have that  $ lim_{z\rightarrow w}\alpha(z)=h(\infty)$ for
 $w\in \partial W\cap\partial D$ and it follows   that $\alpha$ must be a
 constant.
This is a contradiction.
\end{proof}

\begin{lem} \label {l:c}
If $\lambda\in\partial_{a}G$ and   $J$ is  a Jordan arc such that
$J\subset\overline G$ and $J\cap \partial G=\{\lambda\}$, then
$\lim_{z\rightarrow \lambda}\varphi(z)$ exists, where the limit is
taken\ along with $J$.
\end{lem}
\begin{proof}
 Suppose  the limit does not exist. Let $I=\overline
{\varphi(J^{\circ})}\cap\partial D$, then $I$ is a subarc. By virtue
of Lemma~\ref{l:b}, we can find a crosscut $L$ such that its
endpoints are on $I^{\circ}$ and $\psi  $ maps $L$ onto an open arc
in $G$. Let $\gamma=\overline{\psi(L )}$.  Then it is not difficult
to see that $\gamma$ is a Jordan curve and $\gamma\cap\partial
G=\{\lambda\}$.
%, and it separates $G$ into two disjoint domains.
Let $U$  be the domain enclosed by $\gamma$.
 Then $\partial U=\gamma$, and this implies that $\psi=\lambda$ on $\varphi(\gamma)-L$.
 %, which is the part of $\partial U$. This implies
So it follows that $\varphi$ is a constant function, a
contradiction.
 \end{proof}  \\

Let $G$, $\lambda$ and $J $ be as above, and let
$\varphi_{J}(\lambda)$ denote the limit in Lemma~\ref{l:c}.
%The following lemma will be used repeatedly.

\begin{lem} \label {l:2}
 For $a,b\in \partial_{a}G$, if $J_{1}$ and $J_{2}$ are Jordan arcs such that
   $J_{1}\cap\partial
G=\{a\}$ and $J_{2}\cap\partial G=\{b\}$,
 then $ \varphi_{J_{1}}(a)\neq\varphi_{J_{2}}(b)$.
\end{lem}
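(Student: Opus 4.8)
The plan is to show that if $\varphi_{J_1}(a) = \varphi_{J_2}(b) = x$, then the two arcs $\varphi(J_1)$ and $\varphi(J_2)$ in $D$ both land at $x \in \partial D$, and together they must enclose a region of $D$ whose preimage under $\varphi$ lies in $G$ but is ``pinched off'' by the point $a$ (or $b$), forcing $a = b$ and ultimately a contradiction with injectivity of $\varphi$ on $G$.

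Let me sketch this more carefully. First I would set $\Gamma_i = \varphi(J_i)$; by Lemma~\ref{l:2a} each $\Gamma_i$ is an arc in $D$ with one endpoint at $x \in \partial D$ and the other endpoint interior to $D$. Pick points $p \in J_1$, $q \in J_2$ close to $a$ and $b$ respectively, so that $\varphi(p), \varphi(q)$ are close to $x$. The key construction is to join $\varphi(p)$ to $\varphi(q)$ by a small crosscut $\sigma$ of $D$ near $x$ (a short arc, e.g.\ a piece of a small circle centered at $x$) that together with the terminal pieces of $\Gamma_1$ and $\Gamma_2$ bounds a Jordan subregion $W \subset D$ with $x \in \partial W$ and $\operatorname{diam}(\varphi^{-1}(\sigma))$ controllable. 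Pulling back, $\Omega = \varphi^{-1}(W)$ is a simply connected subdomain of $G$ bounded by the sub-arc of $J_1$ from $p$ to $a$, the sub-arc of $J_2$ from $q$ to $b$, and the crosscut $\gamma = \varphi^{-1}(\sigma)$. Now I would exploit that as $p \to a$ and $q \to b$ (shrinking $\sigma$ toward $x$), the crosscut $\gamma$ has endpoints on $J_1$ and $J_2$ tending to $a$ and $b$; using a length--area (Koebe) argument — the sum over a sequence of radii of the squared lengths of $\varphi^{-1}$ of the circular arcs is finite, so some subsequence of these lengths tends to $0$ — one gets $\operatorname{diam}(\gamma) \to 0$, which forces $a = b$. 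Thus both $J_1$ and $J_2$ are Jordan arcs in $\overline G$ meeting $\partial G$ only at the common point $a = b$.

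Having $a = b$, the final step is to derive the contradiction. The two arcs $J_1, J_2$ emanate from the single accessible point $a$ into $G$; their images $\Gamma_1, \Gamma_2$ land at the single point $x$. If $\Gamma_1$ and $\Gamma_2$ approach $x$ along genuinely different routes, they cut off a lens-shaped region $L \subset D$ with $x$ on its boundary but otherwise bounded away from $\partial D$ except at $x$; then $\varphi^{-1}(L)$ is a region in $G$ one of whose prime ends is the single point $a$, and running the length--area estimate again on $\varphi^{-1}$ of small circular arcs about $x$ shows the ``cross-cuts'' pinch down to the point $a$ — but the two boundary arcs $J_1, J_2$ of $\varphi^{-1}(L)$ near $a$ cannot both collapse to $a$ unless $J_1$ and $J_2$ eventually coincide near $a$, and then $\varphi$ would take the same value at the two distinct preimage points where $J_1$ and $J_2$ re-separate — contradicting that $\varphi$ is one-to-one on $G$. (Equivalently: one shows $\varphi^{-1}$ cannot be injective on $W$ while $\varphi$ is injective on $G$, since the pull-back of the region trapped between $\Gamma_1$ and $\Gamma_2$ must be traversed by $\varphi$ in a two-to-one fashion near $a$.)

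The main obstacle will be the last paragraph: turning the intuitive ``two arcs landing at the same boundary point must pinch off a region that $\varphi$ cannot injectively parametrize'' into a rigorous statement. The length--area / Koebe distortion estimate (choosing radii $r_n \to 0$ with $\int \ell(r)^2\, dr/r < \infty$ forcing $\liminf \ell(r_n) = 0$) is the standard tool and should be invoked exactly as in the proof of Corollary~\ref{c:cara}, which the text says uses the same method; the delicate point is organizing the geometry so that the shrinking crosscuts genuinely separate $J_1$ from $J_2$ inside $G$, so that their collapse contradicts either the accessibility hypothesis ($J_i \cap \partial G$ is a single point) or injectivity of $\varphi$. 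I would handle this by arguing by contradiction from the assumption $\varphi_{J_1}(a) = \varphi_{J_2}(b)$ and carefully tracking that the pulled-back small crosscuts have endpoints converging to $a$ and $b$ along $J_1$ and $J_2$ while their diameters go to $0$, which is exactly the configuration that the length--area lemma rules out unless the arcs coincide — and coincidence contradicts injectivity of $\varphi$ off the endpoint.
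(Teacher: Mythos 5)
Your length--area argument is sound and takes a genuinely different route from the paper's. The paper proves this lemma by the method of Corollary~\ref{c:cara}: join $a$ to $b$ by a crosscut $J$ of $G$ assembled from $J_{1}$, $J_{2}$ and a path in $G$; if the two landing points coincided, $\varphi(J)$ would be a Jordan curve in $\overline D$ meeting $\partial D$ in a single point, and a nonzero bounded harmonic function supported on the trapped piece of $\partial G$ would, after composition with $\varphi^{-1}$, have nontangential limit $0$ a.e.\ and hence vanish identically --- a contradiction via Fatou's theorem. Your route instead pulls back the small circular crosscuts $C_{r}=\{|w-x|=r\}\cap D$ and uses $\int_{0}^{\rho}\ell(r)^{2}\,dr/r\leq 2\pi\cdot\mathrm{Area}$ to produce $r_{n}\to 0$ with $\mathrm{diam}\,\varphi^{-1}(C_{r_{n}})\to 0$; since the tails of $\varphi(J_{1})$ and $\varphi(J_{2})$ both cross $C_{r_{n}}$, you get $p_{n}\in J_{1}$, $q_{n}\in J_{2}$ with $p_{n}\to a$, $q_{n}\to b$, $|p_{n}-q_{n}|\to 0$, hence $a=b$. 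This is the classical Lindel\"{o}f--Koebe argument; it is more elementary (no Dirichlet problem, no Fatou theorem) and, with spherical area replacing Euclidean area when $G$ is unbounded, it works for every simply connected $G\neq{\mathbf C}$, whereas the paper's harmonic-function argument is cleanest only when the Dirichlet problem on $G$ is under control. At the point where you conclude $a=b$ you should simply stop: the lemma is only true, and is only ever invoked in the paper, for $a\neq b$ (injectivity on distinct accessible points; crosscuts of $G$ going to crosscuts of $D$), so $a=b$ is already the desired contradiction.

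Your third paragraph, which tries to extract a further contradiction in the case $a=b$, is the one genuine defect, and it cannot be repaired because what it aims at is false: take $G=D$ and $\varphi=\mathrm{id}$, and let $J_{1}$, $J_{2}$ be two distinct Jordan arcs in $\overline D$ each meeting $\partial D$ only at the point $1$; then $\varphi_{J_{1}}(1)=\varphi_{J_{2}}(1)$ and nothing is violated. In particular the claims that the two arcs ``cannot both collapse to $a$ unless $J_{1}$ and $J_{2}$ eventually coincide near $a$'' and that $\varphi$ must traverse the trapped lens ``in a two-to-one fashion near $a$'' are wrong: two disjoint arcs can land at the same accessible point, the lens between $\Gamma_{1}$ and $\Gamma_{2}$ pulls back to an ordinary subdomain of $G$ with $a$ on its boundary, and $\varphi$ stays injective there. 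Delete that paragraph, state explicitly that $a\neq b$ is assumed, and note that the case $a=b$ is genuinely open-ended --- condition 2) of Theorem~\ref{t:1} exists precisely because $\varphi_{J}(a)$ may depend on the arc $J$. With that, your argument is complete.
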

\begin{proof}
It is clear that there is a crosscut $J$ of $G$ whose parts near the
endpoints coincide with those of  $J_{1}$ and $J_{2}$, respectively.
If the lemma is not true, then $\overline{\varphi(J)}$ is a Jordan
curve
  and $\overline{\varphi(J)}\cap\partial D =\{\lambda\}$ for some
  $\lambda\in\partial D$.
Let $W$ be the domain enclosed by $\overline{\varphi(J)}$ and let
$U=\varphi^{-1}(W)$. Then $\varphi$ is constantly equal to $\lambda$
on $\partial U\cap \partial G$, so it follows from Lemma~\ref{l:a}
that $\varphi$ is a constant. It is a contradiction.
\end{proof} \\
%  We now introduce the our key  concept:
\begin{definition}
 $\lambda\in \partial G$ is
called a multiple point if for each $i=1,2$, there is a crosscut
$\gamma_{i}$   and a   component  $V_{i}$  of \ $G-\gamma_{i}$, such
that  $V_{1}\cap V_{2}=\emptyset$,  $\lambda \in\partial
V_{1}\cap\partial V_{2}$, and $ dist(\gamma_{1},\gamma_{2})>0 $.
\end{definition}

Let $\partial_{m} G$ to denote the set of multiple points.
% We first present our continuous extension theorem of the Riemann map,
 We will show  in Theorem~\ref{t:d} that this definition for a multiple point
is equivalent  to the one in the abstract.

\begin{thm}\label{t:main}
 The Riemann map
$\varphi$ extends continuously  to $\overline G$ if and only if \
\mbox{$\partial_{m} G=\emptyset$}.
\end{thm}
\begin{proof}
Necessity.  Suppose that $\partial_{m}G\neq\emptyset$ and let
$\lambda\in\partial_{m} G$. By definition, for  $i=1,2$, there is a
crosscut $J_{i}$  of $G$ and a domain $V_{i}$  such that $V_{i}$ is
a component of $G-J_{i}$, $V_{1}\cap V_{2}=\emptyset$, $
dist(J_{1},J_{2})>0 $ and $\lambda\in
\partial V_{1}\cap\partial V_{2}$.
 Lemma~\ref{l:2} implies that  each $\varphi(J_{i})$ is a crosscut of $D$ and $dist(\varphi(J_{1}),\varphi(J_{2}))>0
 $. So this infers that  $\overline{ \varphi(V_{1}) }\cap \overline{
 \varphi(V_{2})}
=\emptyset$. It is impossible since $\varphi $ is continuous on
$\overline G$.

Sufficiency.
Suppose that $ \partial_{m}G=\emptyset$. %If $\varphi$
%does not extend  continuously to $\partial G$,
We claim that for each $\lambda\in\partial G$,  if
$\{z_{n}\}_{1}^{\infty}\subset G$ and
$\lim_{n\rightarrow\infty}z_{n}=\lambda$, then
$\lim_{n\rightarrow\infty}\varphi(z_{n}) $  exists. Suppose the
contrary, then there is a point $\tau\in \partial G$ and a sequence
$\{z_{n}\}$ such that $z_{n}\rightarrow \tau$ but
$\lim_{n\rightarrow \infty} \varphi(z_{n})$ doesn't exist. Then
$\{\varphi(z_{n})\}$ has at least two cluster points. Let
$\lambda_{i}$, $i=1,2$, be two of those cluster points.
%$d=dist(\lambda_{1},\lambda_{2})$.
Let $r$ be a small positive number (for example,
 $r<\frac{dist(\lambda_{1},\lambda_{2})}{10}$). Set  $J_{i}= \partial D(\lambda_{i},r)\cap   D$, $i=1,2$
 (where  $D(\lambda_{i},r)=\{z:|z-\lambda_{i}|<r\}$). Then each $J_{i}$ is a crosscut of
 $D$ and it separates $D$ into two
disjoint Jordan domains.
  We use $W_{i}$ to denote the one whose closure
contains $ \lambda_{i} $ and let $G_{i}= \psi(W_{i})$, $i=1,2$, then
$G_{1}\cap\ G_{2}=\emptyset$. Observe that each $G_{i} $ contains a
subsequence of $\{z_{n}\}$, and so $\tau\in\partial
G_{1}\cap\partial G_{2}$. On the other hand, there are two subarcs
$l_{1}$ and $l_{2}$ such that $\partial D- \overline
{D(\lambda_{1},r)}\cup\overline {D(\lambda_{2},r)}=l_{1}\cup l_{2}$.
 Let $\gamma $ be a crosscut of $D$ whose endpoints
are in  both  $l_{1}$ and $l_{2}$   such that  it is distant from
 both of $\overline {D(\lambda_{1},r)}$ and   $\overline {D(\lambda_{2},r)}$. By virtue of Lemma~\ref{l:b}
and Lemma~\ref{l:2}, we can modify $\gamma $ slightly (if necessary)
so that $\varphi^{-1}(\gamma )$ is also a crosscut in $G$ and it is
distant from $\overline G_{i}$, $i=1,2$. Since a crosscut separates
$G$ into two disjoint domains, it follows that $dist(\overline
G_{1},\overline G_{2})>0$. So  by definition  that
$\lambda\in\partial_{m}G$. This contradicts our assumption and hence
the claim proved.

 Now for each $z\in \partial G$, we   define
$\varphi(z)=\lim_{w \rightarrow z}\varphi(w)$, where $w$ is taken
from inside of $G$. Then $\varphi$ is clearly well-defined on
$\overline G$. Let $b$ be an arbitrary point in $\partial G$ and let
$\{z_{n}\} \subset
\partial G$ such that $\lim_{n\rightarrow \infty} z_{n}=b$. For each
$n$, there exists $w_{n}\in G$ such that $|w_{n}-z_{n}|<\frac{1}{n}
\hspace{.03in}\mbox{ and }\hspace{.03in}
|\varphi(z_{n})-\varphi(w_{n})|<\frac{1}{n}.$ Thus
$|\varphi(z)-\varphi(z_{n})|\leq
|\varphi(z)-\varphi(w_{n})|+\frac{1}{n} \rightarrow 0, $ as
$n\rightarrow \infty$,
% \hspace{.1in}\mbox{ as }  n\rightarrow\infty$.
 So  it follows clearly that  $\varphi$ is continuous on $\overline G$.
\end{proof} \\

An immediate consequence is  the following  famous  theorem of
Caratheo\"{o}dory:

\begin{cor}[Carathe\"{o}dory] \label{c:cara}
If $G$ is a Jordan domain, then  $\varphi$
  extends to be a homeomorphism  from $\overline G$ to $\overline D$.
\end{cor}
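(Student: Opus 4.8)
The strategy is to deduce the statement from Theorem~\ref{t:1}. I check that a bounded Jordan domain $G$ satisfies conditions 1)--3), which gives a continuous extension $\tilde\varphi$ of $\varphi$ to $\overline G$, and then I promote $\tilde\varphi$ to a homeomorphism onto $\overline D$ using Lemma~\ref{l:2} and a compactness argument. Write $\gamma=\partial G$. I first invoke the classical fact that every point of a Jordan curve is accessible from each of its complementary components (a consequence of the Jordan curve theorem, or of the Schoenflies theorem). Hence $\partial_n G=\emptyset$, so $\partial_a G=\partial G$ and condition 3) of Theorem~\ref{t:1} holds vacuously.

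For condition 1), which now reduces to $\partial_{sa}G=\emptyset$, suppose $\lambda\in\partial_{sa}G$, witnessed by a crosscut $J$ with $G-J=U\cup V$ and $\lambda\in\partial_a U\cap\partial_n V$. The endpoints $p,q$ of $J$ lie on $\gamma$ and split it into arcs $\alpha,\beta$ with $\alpha\cap\beta=\{p,q\}$. By the Jordan curve theorem, $J\cup\alpha$ and $J\cup\beta$ are Jordan curves whose interior domains, after relabeling, are exactly $U$ and $V$; in particular $\partial U\cap\partial V=J$. Thus $\lambda\in J$, and since $\lambda$ is a boundary point of $G$ while the interior of $J$ lies in $G$, we get $\lambda\in\{p,q\}$. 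But an endpoint of a crosscut is accessible from each of the two pieces (approach it along a curve running alongside $J$), so $\lambda\in\partial_a V$, contradicting $\lambda\in\partial_n V$. Hence condition 1) holds.

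Condition 2) asserts that $\varphi_{J_1}(b)=\varphi_{J_2}(b)$ whenever $J_1,J_2\subset\overline G$ are Jordan arcs with $J_i\cap\partial G=\{b\}$; by Lemma~\ref{l:2a} each $\varphi(J_i)$ lands at a well-defined $x_i\in\partial D$, and I must rule out $x_1\ne x_2$. Either $J_1$ and $J_2$ share points other than $b$ arbitrarily close to $b$, in which case following such a common sequence already forces $x_1=x_2$; or else, after passing to subarcs, $J_1\cap J_2=\{b\}$, and joining the interior endpoints of $J_1,J_2$ by a Jordan arc inside $G$ produces a Jordan curve $\Sigma\subset\overline G$ with $\Sigma\cap\partial G=\{b\}$ such that $\varphi(\Sigma\setminus\{b\})$, together with its two limit points $x_1,x_2$, is a crosscut of $D$. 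As in the proof of Lemma~\ref{l:2}, since the unbounded complementary component of $\gamma$ and the connected set $\gamma\setminus\{b\}$ both lie in the unbounded complementary component of $\Sigma$ (being disjoint from $\Sigma\subset\overline G$), the interior domain $W$ of $\Sigma$ satisfies $W\subset G$; hence $\varphi$ carries $W$ onto one of the two components of $D$ cut off by that crosscut, say the one whose boundary contains an arc $\beta'\subset\partial D$. Because $\varphi$ is proper, for every $\zeta\in\beta'$ other than $x_1,x_2$ the cluster set of $\psi=\varphi^{-1}$ at $\zeta$ lies in $\overline W\cap\partial G=\{b\}$, so $\psi$ has radial limit $b$ throughout $\beta'$; by the classical theorem of Fatou already used in Lemma~\ref{l:2a}, $\psi$ is constant, a contradiction. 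Hence condition 2) holds, and Theorem~\ref{t:1} provides a continuous extension $\tilde\varphi:\overline G\to\mathbb{C}$ of $\varphi$. Since $\varphi$ is open with $|\varphi|<1$ on $G$, no boundary point can map into $D$ (otherwise $\psi$ would be continuous there and pull a sequence $z_n\to b\in\partial G$ back into $G$), so $\tilde\varphi(\partial G)\subset\partial D$ and $\tilde\varphi(\overline G)\subset\overline D$.

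Finally, $\tilde\varphi$ is a homeomorphism onto $\overline D$: it is injective on $G$, it has no collision between an interior and a boundary point because $\tilde\varphi(G)=D$ and $\tilde\varphi(\partial G)\subset\partial D$, and if $a\ne b$ in $\partial G$ had $\tilde\varphi(a)=\tilde\varphi(b)$ then, picking Jordan arcs landing at $a$ and $b$ (which exist since every boundary point is accessible), Lemma~\ref{l:2} would be violated. So $\tilde\varphi$ is an injective continuous map on the compact set $\overline G$; its image is compact and contains $D=\varphi(G)$, hence equals $\overline D$, and a continuous bijection from a compact space onto a Hausdorff space is a homeomorphism. I expect condition 2) to be the main obstacle: it is the assertion that each boundary point of a Jordan domain corresponds to a single point of $\partial D$, i.e.\ the genuinely analytic heart of the theorem, and it is the step where one must leave soft topology for the Jordan separation theorem together with the Fatou uniqueness theorem; the accessibility of every boundary point of a Jordan curve, though classical, is the other nontrivial input.
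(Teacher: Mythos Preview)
Your proof is correct and follows the same overall plan as the paper: verify conditions 1)--3) of Theorem~\ref{t:1} and then upgrade the continuous extension to a homeomorphism. The differences are in the details of two steps.

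For condition 2), the paper argues purely topologically: if $\varphi_{J_1}(b)\neq\varphi_{J_2}(b)$, choose a crosscut $\gamma$ of $D$ separating these two points and such that $\varphi^{-1}(\gamma)$ is a crosscut of $G$ with endpoints different from $b$; then $G-\varphi^{-1}(\gamma)=U\cup V$ are Jordan domains with $\partial U\cap\partial V=\varphi^{-1}(\gamma)$, yet $J_1,J_2$ land in different pieces, forcing $b\in\partial U\cap\partial V$, a contradiction. You instead build the Jordan curve $\Sigma$ through $b$, push it forward to a crosscut of $D$, and invoke the Fatou uniqueness theorem on the enclosed arc of $\partial D$. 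Your route is more analytic and essentially reproves the content of Lemma~\ref{l:2} on the spot, whereas the paper's route exploits the Jordan structure of $G$ directly and avoids a second appeal to Fatou; both are short, but the paper's version makes clearer that condition 2) for Jordan domains is really the statement ``no multiple accessible points''. For injectivity on $\partial G$ you simply cite Lemma~\ref{l:2} and finish with the compact--Hausdorff bijection argument; the paper notes that Lemma~\ref{l:2} suffices but also supplies a separate harmonic-measure argument. Your treatment of condition 1) is more explicit than the paper's, which merely asserts it.

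Two minor remarks. First, your phrase ``as in the proof of Lemma~\ref{l:2}'' is a dangling reference, since the paper does not actually write out that proof; fortunately you supply the argument yourself, so nothing is missing. Second, in your condition 2) argument you should note (as you implicitly do) that $W$ is exactly one of the two components of $G\setminus(\Sigma\setminus\{b\})$, not merely contained in one, so that $\varphi(W)$ is the full subdomain of $D$ bounded by the crosscut; this is what makes the cluster-set step go through.
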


\begin{proof}
By definition,   it is clear that a Jordan domain has no multiple
points and  hence $\varphi$ extends continuously to $\overline G$.
Injectivity of $\varphi$ directly follows from Lemma~\ref{l:2} and
therefore $\varphi$ is a homeomorphism.
\end{proof}\\

L. Ahlfors wrote in   \cite{a}[p.232]:  "Unfortunately,
considerations of space do not permit us to include
 a proof of this important theorem (the proof would require a
 considerable amount of preparation)". The theorem he mentions there is
 the Carathe\"{o}dory theorem. Our proof here should
 be   fitted in  standard  text books.

The following result is  from \cite{rmap} and it
 gives a characterization of Jordan domains,
which, together with the idea in Example 1  in \cite{rmap}, are
important steps in the process which leads the author to reach the
  results in this article.
\begin{cor}
   $G$ is a Jordan domain if and only if
$\varphi$   extends  continuously to $\overline G$ and  $\partial G=
\partial_{a} G$.
\end{cor}
\begin{proof}
Necessary. Suppose $G$ is a Jordan domain. Then clearly every point
in $\partial G$ is accessible. Since a Jordan domain has no multiple
boundary points,  the continuity of $\varphi$ to $\overline G$
follows from Theorem~\ref{t:main}.

Sufficiency.  By the hypothesis, the Riemann map $\varphi$ is
continuous on $\overline G$ and  %Since $\partial G=\partial_{a} G$,
so $\varphi(b)$ is well-defined for each $b\in \partial G$. Now it
follows  from Lemma~\ref{l:2}   that $\varphi$ is also 1-1, hence
$G$ must be a Jordan domain.
\end{proof} \\

As promised, we now show that the definition of multiple points is
equivalent to the one  given in the abstract which involves no
crosscuts and is purely topological.
\begin{thm} \label{t:d}
$\lambda\in \partial_{m} G$ if and only if there are simply
connected subdomains $ U$ and $V$  such that
 $\lambda \in\partial U \cap\partial V$ and $ dist (\partial U\cap G ,
 \partial V\cap G )>0$.
\end{thm}
\begin{proof}
 Necessity is straightforward. For   sufficiency, suppose that there
are  subdomains  $ U$ $\&$ $V$  such that
 $\lambda \in\partial U \cap\partial V$ and  \ $ dist  \partial U\cap G),
 \partial V\cap G )>0$. We first claim  that $U\cap V=\emptyset$. In fact,
if $Q$ is a component of $U\cap V$, then $\partial Q=(\partial Q\cap
U)\cup (\partial Q\cap \partial U)=(\partial Q\cap \partial V)\cup
(\partial Q\cap \partial U)$, the connectivity implies that
\[0=dist(\partial Q\cap \partial V,  \partial Q\cap \partial U)\geq dist (\partial V\cap
G, \partial U\cap G ).\]   It is a contradiction and the claim is
proved. Let $W=\varphi(U)$, then $W\subset D$ and $\partial
D-\partial W=\cup\ l_{i}$, where each $l_{i}$ is a component of
$\partial D-\partial W$. There is one $l_{j}$ such that $l_{j}\cap
\partial [\varphi(V)]\neq \emptyset$. %Let $E=\partial D-l_{j}$.
On the other hand, we have that  $\partial W-\partial D=\cup\
p_{i}$, where each $p_{i}$ is a component of $\partial W\cap D$
(note, $\partial W-\partial D =\partial W\cap D$). Evidently, each
$p_{i}$ separates $D$ into disjoint subdomains. Among them there is
one $p_{k}$ such that $\overline p_{k}\cap \overline l_{j}\neq
\emptyset$. %(this is because for each $l_{i}$ there is a component of
%the complement of  $\partial D\cup \partial W)$ whose boundary is
%consists of $\overline l_{i} $ and one $\overline p_{j}$).
 $\partial D\cup p_{k} $ is a connected closed set and the complement of
$\partial D \cup p_{k}$ contains both $W$ and $\varphi(V)$.  Let
$W_{1}$ denote the component of the complement of $\partial D \cup
p_{k}$ that contains $W$, then $\partial W_{1}\cap D= p_{k}$. So if
we let $V_{1}=\varphi^{-1}(W_{1})$, then $\partial V_{1}\cap G
=\varphi^{-1}(p_{k})\subset \partial U\cap G$.  Observe that
$\lambda \in\partial V_{1}$ also. Set $\delta=\frac{ dist (\partial
U\cap G,\partial V\cap G)}{10}$. Let
$E=\overline{\varphi^{-1}(p_{k})}$,  then $E$  is compact, so there
are finitely  many disks $D(a_{1}, \delta)$ ,..., $D(a_{n},
\delta)$, where $a_{1}, a_{2},...,a_{n},$ are in $E$, such that
$\cup_{j=1}^{n}\ D(a_{j},\delta) \supset E$. Set $\Omega
=\cup_{j=1}^{n}\ D(a_{j},\delta)$, then this is a connected open
set. Let $\partial_{\infty}\Omega$ be the outer boundary of
$\Omega$, namely,   the boundary of the unbounded component of the
complement of $\Omega$. Notice that $\partial_{\infty}\Omega$ is a
(closed) Jordan curve and if  let $O$ be the component of $G-E$
which contains $V$, then $\partial_{\infty}\Omega\cap O\neq
\emptyset$. Let $J_{1}$ be a component of
$\partial_{\infty}\Omega\cap O$. Since $ \partial
O=(\partial O\cap\partial G)\cup E%= (\partial O\cap\partial G)\cup\varphi^{-1}(p_{k})
$ and $J_{1}\cap E=\emptyset$, it follows that
the endpoints of $J_{1}$ are in $\partial G$ and thus $J_{1}$ is a
crosscut of $G$. It separates $G$ into two parts and one of which
contains $V_{1}$. Similarly, if we repeat the above process for $V$,
then we can get a domain $V_{2}$ and a crosscut $J_{2}$ of $G$ such
that $\lambda\in
\partial V_{2}$, $V\subset V_{2}$, $\partial V_{2}\cap G\subset
\partial V\cap G$, $dist(J_{2},\partial V_{2}\cap G)< \delta $. Now for each $i=1,2$,
if we let $\Omega_{i}$ be the component of $G-J_{i}$ which  contains
$V_{i}$, then $\lambda\in \partial
\Omega_{1}\cap\partial\Omega_{2}$, $dist (J_{1},J_{2})>8\delta$ and
$\Omega_{1}\cap\Omega_{2}=\emptyset$, so it follows by definition
that $\lambda\in\partial_{m}G$.
\end{proof}\\

\noindent{\bf Remark.} We like to point out the hypothesis of
Theorem~\ref{t:d} implies that $U$ and $V$ must be disjoint (as we
show in the proof above). With this theorem, the concept of multiple
points and  Theorem~\ref{t:main} should be understandable to
undergraduate students\\

The following concepts along with Proposition~\ref{p:1} are not
essential in this article, but perhaps it is   worthwhile to present
them here.

 Let $\partial_{p}G=\partial G-\partial_{m}G$ and  we call the elements of
$\partial_{p}G$  prime (boundary) points. This is just a terminology
we introduce and it has no relations with prime ends introduced by
Carathe\"{o}dory. Prime  points are   in contrast to multiple
points.  Intuitively, a multiple point is a boundary point of
multiple number of   disjoint subdomains   while a prime point is
not.

 Let $\lambda\in \partial G_{m}£ý$,  then by definition
there exist  crosscuts $J$ $\&$ $L$  as well as simply connected
subdomains $U$ $\&$ $V$ such that $dist(J,L)>0$,  $ U\cup V
=\emptyset$. $\lambda$ is called a multiple accessible point if
$\lambda\in\partial_{a} U\cap\partial_{a} V $,   a multiple
unaccessible point if $\lambda\in\partial_{n} U\cap\partial_{ n} V
$, and a semi-accessible  point if  $\lambda \in\partial_{a} U \cap\
\partial_{n}V $. We use $\partial_{ma} G$, $\partial_{mn} G$ and    $\partial_{sa}
G$ to denote the set of  multiple accessible points and the set of
multiple unaccessible points  and the set of semi-accessible points,
respectively.
%From the definitions, we see that
 The sets $\partial_{sa}G$, $\partial_{ma}G$, and $\partial_{mn}G$
  are not necessarily mutually disjoint subsets of $\partial_{m}G$.

\begin{prop}\label{p:1} Let $G$ be a simply connected domain. Then
\[ \partial_{m}G=  \partial_{sa}G\cup \partial_{ma}G\cup
\partial_{mn}G.\]
and consequently
\[ \partial G= \partial_{p}G\cup \partial_{sa}G\cup \partial_{ma}G\cup
\partial_{mn}G.\]
\end{prop}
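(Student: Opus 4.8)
The plan is to prove the first displayed equality, $\partial_m G=\partial_{sa}G\cup\partial_{ma}G\cup\partial_{mn}G$, and then obtain the second one at no cost: since $\partial_p G$ is \emph{defined} as $\partial G-\partial_m G$, the identity $\partial G=\partial_p G\cup\partial_m G$ is a tautology, and substituting the first equality into its right-hand side gives $\partial G=\partial_p G\cup\partial_{sa}G\cup\partial_{ma}G\cup\partial_{mn}G$. So the whole proposition reduces to the first equality, which I would prove by two inclusions. Throughout I use Lemma~\ref{l:sep} (equivalently the corollary following Lemma~\ref{l:2}): whenever $J$ is a crosscut of $G$ with $G-J=U\cup V$, the pieces $U$ and $V$ are genuine (simply connected) domains, so that $\partial_a U,\partial_n U\subseteq\partial U$, similarly for $V$, and the symbols $\partial_a U,\partial_n U,\dots$ are meaningful.

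For the inclusion $\partial_{sa}G\cup\partial_{ma}G\cup\partial_{mn}G\subseteq\partial_m G$: if $a\in\partial_{ma}G$ then for a suitable crosscut $J$ with $G-J=U\cup V$ one has $a\in\partial_a U\cap\partial_a V-J\subseteq\partial U\cap\partial V-J$, so $a\in\partial_m G$; replacing $\partial_a$ by $\partial_n$ throughout handles $a\in\partial_{mn}G$ verbatim. If $a\in\partial_{sa}G$, say $a\in\partial_a U\cap\partial_n V$ for a crosscut $J$ with $G-J=U\cup V$, then certainly $a\in\partial U\cap\partial V$, and the only thing left is $a\notin J$. Since the interior points of a crosscut lie in $G$, the only way $a$ could lie on $J$ is as an endpoint of $J$; but an endpoint of a crosscut is accessible from each of the two complementary subdomains, so $a\in\partial_a V$, contradicting $a\in\partial_n V$. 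Hence $a\notin J$ and $a\in\partial_m G$.

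For the reverse inclusion $\partial_m G\subseteq\partial_{sa}G\cup\partial_{ma}G\cup\partial_{mn}G$: take $a\in\partial_m G$ witnessed by a crosscut $J$ with $G-J=U\cup V$ and $a\in\partial U\cap\partial V-J$. Because $\partial_n U=\partial U-\partial_a U$, and likewise for $V$, the point $a$ falls into exactly one of four mutually exclusive and exhaustive cases according to whether it is accessible or not from $U$ and from $V$. If $a\in\partial_a U\cap\partial_a V$, then, since $a\notin J$, the triple $(J,U,V)$ exhibits $a$ as a multiple accessible point, so $a\in\partial_{ma}G$; if $a\in\partial_n U\cap\partial_n V$, then likewise $a\in\partial_{mn}G$; and if $a$ lies in $\partial_a U\cap\partial_n V$ or in $\partial_n U\cap\partial_a V$, then the same crosscut together with the unordered pair $\{U,V\}$ (relabelled if necessary) shows $a\in\partial_{sa}G$. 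This exhausts the cases, so the inclusion, and with it the proposition, follows.

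The only ingredient that is not pure definition-chasing is the fact, invoked in the second paragraph, that the endpoints of a crosscut $J$ of $G$ are accessible from both components of $G-J$ — equivalently, that a semi-accessible point is never an endpoint of a witnessing crosscut. I would either cite this as classical or give the short argument: near an endpoint $e$ of $J$ the arc $J$ can be straightened by a homeomorphism of a neighbourhood of $e$ in the plane, after which an arc hugging $J$ on the $U$-side (respectively the $V$-side) and terminating at $e$ provides the Jordan arc in $\overline U$ (respectively $\overline V$) meeting $\partial U$ (respectively $\partial V$) only at $e$. I expect this to be the only step requiring any care; everything else is immediate from the definitions and Lemma~\ref{l:sep}.
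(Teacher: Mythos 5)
Your proof is correct, and it follows the route the paper implicitly intends: the paper states Proposition~\ref{p:1} without any proof, treating it as evident from the definitions, and your double-inclusion argument is exactly that definition chase (with the second identity reduced to the first via $\partial_{p}G=\partial G-\partial_{m}G$). The one point of substance you rightly isolate --- that an endpoint of a crosscut $J$ is accessible from both components of $G-J$, so that a semi-accessible point can never lie on the witnessing crosscut even though the paper's definition of semi-accessible omits the ``$-J$'' clause --- is a genuine detail the paper glosses over, and your straightening/hugging-arc sketch is an adequate justification of it.
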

\begin{proof}
For the first equality, let $a\in\partial_{m}G$. By definition there
exist crosscuts $J$ $\&$ $L$  as well as simply connected subdomains
$G_{1}$ $\&$ $G_{2}$ such that $dist(J,L)>0$,  $G_{1}\cup G_{2}
=\emptyset$.  Now there are two cases: 1). $a\in\partial_{a}G_{1}$:
if $a\in\partial_{a}G_{2}$, then $a\in
\partial_{ma}G$; otherwise, $a\in\partial G_{2}-\partial_{a}G_{2}$,
then we have that $a\in
\partial_{sa}G$.  2).  $a\in \partial G_{1}-\partial_{a}G_{1}$: if
$a\in\partial_{a} G_{2}$, then  we also have $a\in\partial_{sa}G$,
or else if $a\in
\partial_{n}G_{2}$, then this means
that $a\in\partial_{mn}G $. So in summary we have that $a\in
\partial_{sa}G\cup \partial_{ma}G\cup
\partial_{mn}G.$ Consequently, the first equality holds. The second
equality directly follows from the first one and the definition.
\end{proof}\\

Let $\partial_{pa}G=\partial_{a}G-\partial_{sa}G$ and the points in
$\partial_{pa}G$ are called purely accessible points. In the case
that $b\in\partial_{pa}G$, if $\varphi_{J}( b)$ is  same for each
Jordan arc $J$  with the property that $J$ is contained in
$\overline G$ and $J\cap\partial G=\{b\}$,
 then   $\varphi$  has a well-defined boundary value at $b$
 and we denote it by $\varphi (b)$.

Let $E$ be a component of  $  \partial_{n}G$ and let $b\in E$. If
$\lim_{z\rightarrow b} \varphi(z)$   (where $z$ is taken from inside
of $G$) exists and all the limits are the same  for   $b\in E$, then
we say $\varphi$ extends to be a  constant   on  $E$.

The  next theorem   reveals how $\varphi$ behaves on different types
of boundary points.

\begin{thm}\label{t:1}
%Let $\varphi$ be  the Riemann map of a simply connected domain $G$.
 $\varphi$ has a continuous extension to $\overline G$ if and only if
the following hold:

\noindent 1) % $\partial_{a} G= \partial_{pa} G$
every point in $\partial_{a}G$ is purely accessible;

\noindent 2) $\varphi $ has a well-defined boundary value  for each
$b\in\partial_{a} G$;

\noindent 3)  $\varphi $  extends  to be a  constant   on  $E$ for
each  component $E$ of \  $\partial_{n}G$.
\end{thm}

\begin{proof}
Necessity. Suppose that $\varphi$ can extend  continuously to
$\overline G$.  Then clearly $\varphi(z)$ has a  well-defined
boundary value function which is continuous on $\partial G$. By
Lemma~\ref{l:2} we know that  $\varphi$  is injective on
$\partial_{a}G$. Observe that 1) is equivalent to
$\partial_{a}G\cap\partial_{sa} G= \emptyset$. Suppose 1) does not
hold. Then $\emptyset \neq \partial_{a} G\cap\partial_{sa} G \subset
\partial_{m} G$, and this contradicts Theorem~\ref{t:main}. So 1)
must hold. 2) is obvious.

For   the proof of 3), let $E$ be a component
 of $\partial G-\partial_{a} G$ and let $F=\varphi(E)$. It is easy to show that $\overline E^{\circ}=\overline E$ and $E^{\circ}$ is connected.
 Let $b\in F$,  then there is a Jordan arc
 $J_{b}$ in $D$ such that $\overline J_{b} \cap \partial D=\{b\}$, $\varphi^{-1}(\overline {J_{b}})\cap E^{\circ}  \neq
 \emptyset$ and $\varphi^{-1}(\overline {J_{b}})\cap E^{\circ}  $ is
 connected.
  % $\overline {\varphi^{-1}(J_{b})}\cap E $ is connected and has interior.
  Let $\Gamma_{b}$ denote $\varphi^{-1}(\overline {J_{b}})\cap
  E^{\circ}$ and   let  $B_{b}$ denote an open ball (i.e., an open connected subset of $\partial G$) contained in $\Gamma_{b}$.
Since $\overline E$ is a compact metric space,   for each positive
integer $n$, the number of   $B_{b}$s with radiuses  great than
$\frac{1}{n}$ and mutually disjoint   is finite. So there at most
countable many disjoint $B_{b}$s. Therefore, if we assume that $F$
is not a single point set,  then there are $b_{1}$ and $b_{2}$ in
$F$, such that $B_{b_{1}}\cap B_{b_{2}}\neq \emptyset$, and
consequently, we have that $\Gamma(b_{1}) \cap \Gamma(b_{2})
\neq\emptyset$. For each
 $i=1,2$, there exists a crosscut $l_{i}$ of $ D$ and  a Jordan domain $D_{i}\subset
D$ such that   $dist(l_{1},l_{2})> 0$,     $D_{1}\cap
D_{2}=\emptyset$,  $\partial D_{i}$ consists of $l_{i}$ with an arc
on $\partial D$ and  $b_{i}\in
\partial D_{i}$. By modifying $l_{i}$ slightly if
necessary, we can in addition require that each
$\varphi^{-1}(l_{i})$ is also a crosscut of $G$ and
$dist(\varphi^{-1}(l_{1}),\varphi^{-1}(l_{2}))>0$. Now, let
$V_{i}=\varphi^{-1}(D_{i})$, $i=1,2$, then $V_{1}\cap
V_{2}=\emptyset$. By our constructions of $V_{i}$s,  we see
 that $ \Gamma(b_{1}) \cap \Gamma(b_{2})\subset\partial
 V_{1}\cap \partial V_{2} \subset
\partial _{m} G$. This contradicts the assumption that
$\varphi$ is continuous,
   hence $\varphi$ is constant on $E$.

 Sufficiency. Assume 1), 2) and 3) hold, we show that  $\partial_{m}G=\emptyset$.
 Firstly, it is obvious that 1) is equivalent to
 $\partial_{sa}G=\emptyset$. Secondly,   2) clearly implies $\partial_{ma}G=\emptyset$.
  Lastly, we show that 3) implies $\partial_{mn}G= \emptyset$.
Suppose the contrary, then there is a point $a$ belongs to
$\partial_{mn}G$. By definition there exist crosscuts $J$ $\&$ $L$
as well as simply connected subdomains $G_{1}$ $\&$ $G_{2}$ such
that $dist(J,L)>0$,  $G_{1}\cup G_{2} =\emptyset$ and $a\in \partial
G_{1}\cap\partial G_{2}$. However,   by hypothesis
$\lim_{z\rightarrow a} \varphi(z)$ exists. Let
$\lambda=\lim_{z\rightarrow a} \varphi(z)$ (where $z$ is taken from
inside of $G$)).  Since $\lim_{z\rightarrow a} \varphi(z)$ takes
points from both $G_{1}$ and $G_{2}$, we must have that  $\lambda
\in \overline{ \varphi(G_{1})}\cap \overline{\varphi(G_{2})}$. But,
$dist(\varphi(J),\varphi(L))>0$, so it follows that  $\overline{
\varphi(G_{1})}\cap \overline{\varphi(G_{2})} =\emptyset$.  This is
a contradiction, hence $\partial_{mn}G= \emptyset$. Consequently, we
have $\partial_{m}G=\emptyset$ and therefore $\varphi$ extends
continuously to $\partial G$.
\end{proof}
 \\

\noindent {\bf Remark.} Theorem~\ref{t:1}  really tells how the
extension of $\varphi$ on each part of
 $\partial G$  is done. Modifying our method
  slightly, one can   use it to handle
 conformal maps between non-simply connected domains.

 Osgood once thought that it did not make sense to ask about boundary behavior
 for non-Jordan domains. Using ideas from  Theorem~\ref{t:main} and
 Theorem~\ref{t:1}, we now know that it is possible to define boundary value
 for each point in $\partial G$ except in some extreme cases.  We can
 do so by defining a multiplicity, $m$, for each $a\in \partial G$ as follows: if
$a$ is a prime point, let $m=1$;   else if  $a\in \partial_{m} G$:
if there is an integer $k$ such that  for  any  small disk centered
at $a$,  $G\cap W$ is the union of at most $k$ mutually disjoint
subdomains whose boundary contains $a$, let $n$ be the smallest such
$k$ and then let $m=n$; or else let $m=\infty$. Clearly, in the case
that $a$ has multiplicity $m$, $\varphi(a)$ has $m$ values. It may
not
  make much sense to talk about boundary value in the (extreme) case that
  $m=\infty$. However, one should be  aware that we can actually
  define  a countable many values at a point with $m=\infty$ if we
  accept $\varphi$ as an infinite values function on $\overline G$.
  In that case, $\varphi$ would have values on $\partial G$ for any
  simply connected domain $G$.\\

Let  $G$ be a cornucopia
 domain (see \cite{rs} or \cite{ga})
whose boundary has $\partial D$ as part of it.  It is easy to show
that  $\varphi(z)$ is well-defined  for every   $z\in \overline
G-\partial D$. Notice that $\partial_{m} G=\emptyset$ and $\partial
D =\partial_{n} G$. So by Theorem~\ref{t:main}   $\varphi$ extends
continuously to $\overline G$ and by  Theorem~\ref{t:1} $\varphi$ is
a constant  on $\partial D$, which is   the limit of
$\varphi(z_{n})$ as $\{z_{n}\}$ approaches to a point $\partial D$
from inside of $G$. This shows how the continuous extension of
$\varphi$ is done for a  cornucopia.
%, one may look  how the proof goes in details in Example 1 of \cite{rmap}.\\

 The   cornucopia domains are used in many articles and books, but
 the literatures the author has seen make no comments on the continuity of
 the  Riemann maps. Perhaps  that is because  no  direct proofs can be given easily.
Actually, although the author has known cornucopia domains  three
decades ago, he was aware of this  fact only in recent years. With
Theorem~\ref{t:main}, now one
  can easily and immediately tell if a Riemann map is continuous or not for a domain like a cornucopia.
    \\

The following  example demonstrates that unlike the case of
$\varphi|\partial_{a} G$, $\varphi$ is   not necessarily injective
when $\varphi$ is continuous on $\overline G$.

\begin{ex}
There is a  domain $G$ for which
 $ \partial_{n} G$ has two different components,  $E$ and $F$,   such that
  $\varphi(E)= \varphi(F)$.
\end{ex}

Let $z_{1}=-1$, $z_{2}=1 $ and $z_{3}=i$. For two complex numbers
$z$ and $w$, we use $ \overline{[ z,w]}$ to denote the (closed)
segment from $z$ to $w$. Let $\{u_{n}\}_{n=1}^{\infty}$ be a
sequence of points on the segment $\overline {[z_{1},z_{3}]}$ such
that $Im(u_{1})=0$,  $\{Im (u_{n})\}$ is an increasing sequence and
$u_{n}\rightarrow z_{3}$. For each $n$, let $L_{n}$ be the segment
which is horizontal to the real line and passes through $u_{n}$. Let
$v_{n} =L_{n}\cap \{z:Re(z)=-1\}$.
 For each $n\geq 1$, %replace the segment $\overline{x_{n+1}-z_{n}}$
Let $l_{n}$ be the  the path which is the union of $\overline
{[u_{n}-v_{n+1}]}$ with $\overline {[v_{n+1}-u_{n+1}]}$. Now set
\[J=(\cup_{n=1}^{\infty}\ l_{n})\cup[-1, 0]\cup\overline{[0, i]}\cup
\overline{[ i,-1+i]}.\] Then $J$ is a connected closed subset whose
complement consists of exactly a bounded connected
 simply  domain and a unbounded
domain. Let $V$   denote the bounded one. Set $W$ be the reflecting
image of $V$ with respect to the imaginary axis and let $G$ be the
interior of $\overline{V\cup W}$. %Then $G$ is a Carathe\"{o}dory domain.
  Let $I =\overline {[-1+i,1+i]} $. Then $I\subset\partial
G$.
 Note,  $I-{i}$ is the union of intervals  $E$ and $F$, where $E=[-1+i,i)$
 and $F=(i,1+i]$.
   From the construction,  we see that $i\in \partial_{a} G$ and
  $(E\cup F)=\partial_{n}G$. Thus, both $E$ and $F$
 are the components of $\partial G_{n}$. Clearly we have that
 $\partial_{m}G=\emptyset$ and so it follows by Theorem~\ref{t:main}
that $\varphi$ extends continuously  to $\overline G$  (one can
directly prove this using the method similar to the one used in
Example 1 in \cite{rmap}).
Therefore,  we have $\varphi(E)=\varphi(F)$.\\

 Nevertheless, we    have the following result, which
further shows how   a continuous Riemann map of $G$ behaves on the
boundary of $G$.

\begin{thm}\label{t:3}
%Let $G$ a simply connected domain with no multiple boundary points.
If $\varphi$ is continuous on $\overline G$, then each
 $\varphi^{-1} (z)$ is   a connected closed subset.
%continum for each $z\in \overline D$.
 Moreover, let $X=\{\varphi^{-1} (z)\in \overline D\}$  and define
  $\hat\varphi$   by $\hat\varphi(\overline x)\rightarrow\varphi(x)$
for each $\overline x\in X$,
 then   given $X$ with quotient topology,
 $\hat\varphi$ is a homeomorphism  from $X$ onto $\overline
D$.
\end{thm}

\begin{proof}
Assume that  $\varphi$ extends to be continuous on $\overline G$.
Let $b$ be a point in $\overline D$ and assume that $\varphi^{-1}
(b)$ is not connected.
 Then  there exists a component of $\varphi^{-1} (b)$, say $E $,
such that $\partial  G \cap E \neq \emptyset$. Let $a\in
 \partial  G \cap E $ and let $\{v_{n}\}\subset G$ such that
$v_{n}\rightarrow a$. Set $u_{n}=\varphi(v_{n})$
 then   $u_{n}\rightarrow \varphi(a)=b$. For seek of
convenience, we may assume that $b=i$ and in addition require
$Re(u_{n})\leq 0$ for each $n\geq 1$.
% Let $r_{u}$ be the segment joining $0$ to $u$.
 So we can find a Jordan arc $J$ in $D$ which
lies in the left half plane and  contains a subsequence of
$\{u_{n}\}$. Now we consider the right (open) half unit disk.
\[\mbox{Case 1}:\hspace{.2in}\overline{\varphi^{-1}(\{w: Re(w)>0\hspace{.05in} \&\hspace{.05in} w\in
  D\}) }\cap(\varphi^{-1} (i) - E )\neq \emptyset.\] Then, similar to the
 process above, we can find a  Jordan (open) arc $L$ contained in the right
disk such that $\overline{\varphi^{-1}(L)}\cap(\varphi^{-1} (i) - E
)\neq \emptyset$. So    we also are able to find a Jordan curve
$\gamma\subset D$ such that $J\cup L\subset \gamma$ and $\gamma\cap
\partial D=\{i\}$.
 \[\mbox{Case 2}:\hspace{.2in}\overline{\varphi^{-1}(\{w: Re(w )>0\hspace{.03in}
\&\hspace{.03in} w\in  D\})} \cap(\varphi^{-1} (i) - E )=
\emptyset.\]
 In this case, we must have
$\overline{\varphi^{-1}(\{w: Re(w)>0\hspace{.03in} \&\hspace{.03in}
w\in
 D\})} \cap E  \neq\emptyset.$ Meanwhile, we  also have that
$\varphi^{-1}(\{w: Re(w)\leq 0\hspace{.03in} \&\hspace{.03in} w\in D
\})\cap(\varphi^{-1} (i) - E )\neq \emptyset$. So there is a
component $F$ such  that $\varphi^{-1}(\{w: Re(w)\leq
0\hspace{.03in} \&\hspace{.03in} w\in D \})\cap F\neq \emptyset$.
 Now if we start the next process as in the
beginning, by treating $F$ as $E$ and $E $ as $(\varphi^{-1} (i)-
F)$, then we will still be able to get the Jordan curve $\gamma$ as
  demonstrated  above.

Now, let $V$ denote the Jordan domain enclosed by $\gamma$ and let
$W=\varphi^{-1}(V)$. Then
 $\partial V-\{i\}=\gamma-\{i\}\subset D$ and so $\varphi^{-1}(\partial
V-\{i\})\subset G$. But this would force that $\varphi(\partial
W\cap\partial G)=\{i\}$. So
 $\lim_{w\rightarrow\lambda}\varphi(w)=i$   for
each $\lambda\in\partial W\cap\partial G$, where the limit is taken
from inside of $W$, and it follows from Lemma~\ref{l:a} that
$\varphi$ is a constant. This is  impossible and hence
$\varphi^{-1}(z)$ is either a single point or is connected for each
$z\in \overline D.$

Finally, since $\varphi$ now is a continuous surjective map from a
compact   space to a compact space,
 it maps closed subsets to   closed subsets and
therefore it is quotient map. So it follows  that $\hat\varphi$ is a
homeomorphism.
\end{proof}
\\

This theorem says  that $\psi$ is the restriction to $D$ of a
homeomorphism from $\overline D$ to some compact space (the quotient
space) with the topology whose restriction to $G$ is the uniform
topology. Conversely, if $\psi$ extends to be a homeomorphism on
$\overline D$, then it is not difficult to show that $G$ satisfies
the hypothesis of Theorem~\ref{t:1} and hence $\varphi$ extends
continuously to $\overline G$.
\\

    \noindent {\bf Acknowledgement.}
    %I have been working on the problem for years,
    The author  wishes to express his appreciations to  J. Carmona, K.
Fedorovskiy and J. Xia for helpful comments to some earlier
versions.

%I am not an analyst working in conformal maps, cluster theory or
%other related areas. However,
%I have been working with the Riemann
%maps and analytic spaces on simply connected domains  since the
%start of my Ph.D. thesis. I could not have worked out with this
%paper without those works related to Riemann maps over the years.
%Lastly, I  published five research papers with name James Zhijian
%Qiu when I was a permanent resident in USA in 90s.  So I only use
%Qiu as the author name in the references if  the cited paper is mine.

\end{document}